\newtheorem{thm}{Theorem}[section]
\newtheorem{cor}[thm]{Corollary}
\newtheorem{lem}[thm]{Lemma}
\theoremstyle{definition}
\newtheorem{defin}[thm]{Definition}
\theoremstyle{remark}
\newtheorem{remarks}[thm]{Remarks}
\newtheorem{exs}[thm]{Examples}
\numberwithin{equation}{section}
\newcommand{\delete}[1]{} % Comment out text.
\newcommand{\nt}{\noindent}
\def\eps{{\varepsilon}}
\newcommand{\ben}{\begin{enumerate}}
\newcommand{\een}{\end{enumerate}}
\newcommand{\bit}{\begin{itemize}}
\newcommand{\eit}{\end{itemize}}
\def\R {{\mathbb R}}
\def\N {{\mathbb N}}
\def\Homeo{{\mathrm{Homeo}}\,}
\def\B{{\mathcal{B}}}
\def\F{{\mathcal F}}
\def\QED{\nobreak\quad\ifmmode\roman{Q.E.D.}\else{\rm Q.E.D.}\fi}
\def\a{\alpha}
\def\g{\gamma}
\newcommand{\cls}{{\rm{cls\,}}}
\newcommand{\sgn}{{\rm{sgn}}}
\begin{document}

%1305
\title[]
{A note on tameness of families having bounded variation}

%Authors
%    Information for first author
%\author[]{Eli Glasner}
%\address{Department of Mathematics,
%Tel-Aviv University, Ramat Aviv, Israel}
%\email{glasner@math.tau.ac.il}
%\urladdr{http://www.math.tau.ac.il/$^\sim$glasner}

%    Information for second author
\author[]{Michael Megrelishvili}
\address{Department of Mathematics,
Bar-Ilan University, 52900 Ramat-Gan, Israel}
\email{megereli@math.biu.ac.il}
\urladdr{http://www.math.biu.ac.il/$^\sim$megereli}

\subjclass[2010]{Primary 54D30, 26A45, 46-xx; Secondary 54F05, 26A48}

\keywords{bounded variation, independent family, fragmented function, Helly's selection theorem, linear order, LOTS, order-compactification, sequential compactness}

\thanks{This research was supported by a grant of Israel Science Foundation (ISF 668/13)}

\date{December 11, 2016}

\begin{abstract} We show that for arbitrary linearly ordered set $(X, \leq)$ any bounded family of (not necessarily, continuous) real valued functions on $X$ with bounded total variation does not contain independent sequences. We obtain generalized Helly's sequential compactness type theorems. One of the theorems asserts that for every compact metric space $(Y,d)$ the compact space $BV_r(X,Y)$ of all functions $X \to Y$ with variation $\leq r$ is sequentially compact in the pointwise topology. Another Helly type theorem shows that the compact space $M_+(X,Y)$ of all order preserving maps $X \to Y$ is sequentially compact where $Y$ is a compact metrizable partially ordered space in the sense of Nachbin. 
\end{abstract}

\maketitle

%\setcounter{tocdepth}{1}
%\tableofcontents

\section{Introduction} 
  
  Recall that the 
  Helly's compact space $M_+([0,1],[0,1])$ of all increasing selfmaps on the closed unit interval $[0,1]$ is sequentially compact in the pointwise topology.  
  A slightly more general form of this result is the following classical result of Helly (see \cite{Helly} and also \cite{Natanson}). 
  
  \begin{thm} \label{t:HellyClassic} 
  {\bf (Helly's selection theorem)} 
  	For every sequence of functions from the set $BV_r([a,b],[c,d])$ of all real functions $[a,b] \to [c,d]$ with variation $\leq r$ there exists a pointwise convergent subsequence. 
  	That is, $BV_r([a,b],[c,d])$ is sequentially compact.
 \end{thm}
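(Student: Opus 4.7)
The plan is to reduce to the case of monotone functions via the Jordan decomposition, and then perform a diagonal extraction on a countable dense subset of $[a,b]$.

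\textbf{Step 1 (Jordan decomposition).} For each $f \in BV_r([a,b],[c,d])$, let $v_f(x) := V_a^x(f)$ denote the total variation of $f$ on $[a,x]$. Standard facts about variation give that $v_f$ and $v_f - f$ are both monotone non-decreasing, and both take values in a common bounded interval depending only on $r, c, d$. Thus a sequence $(f_n)$ in $BV_r([a,b],[c,d])$ produces two bounded sequences $(v_{f_n})$ and $(v_{f_n} - f_n)$ of monotone non-decreasing functions; a common pointwise convergent subsequence of the two immediately yields a pointwise convergent subsequence of $(f_n)$. It therefore suffices to prove Helly's theorem for bounded sequences $(g_n)$ of monotone non-decreasing functions $[a,b] \to [c,d]$.

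\textbf{Step 2 (Diagonal extraction on a dense set).} Fix a countable dense set $D = (\Q \cap [a,b]) \cup \{a,b\}$ and enumerate $D = \{q_1, q_2, \dots\}$. Since each $g_n(q_i) \in [c,d]$, the Bolzano--Weierstrass theorem combined with the standard diagonal procedure yields a subsequence $(g_{n_k})$ such that $g^{*}(q) := \lim_{k \to \infty} g_{n_k}(q)$ exists for every $q \in D$. Monotonicity of each $g_{n_k}$ passes to the limit, so $g^{*} \colon D \to [c,d]$ is non-decreasing.

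\textbf{Step 3 (Extension and convergence at continuity points).} Extend $g^{*}$ to $[a,b]$ by setting $G(x) := \sup\{g^{*}(q) : q \in D,\ q \leq x\}$, which is monotone non-decreasing on all of $[a,b]$ and therefore has at most countably many points of discontinuity. If $x \in [a,b]$ is a continuity point of $G$, then for any $\varepsilon > 0$ we can choose $q, q' \in D$ with $q \leq x \leq q'$ such that $G(q') - G(q) < \varepsilon$. Monotonicity gives $g_{n_k}(q) \leq g_{n_k}(x) \leq g_{n_k}(q')$, and since $g_{n_k}(q) \to g^{*}(q) = G(q)$ and $g_{n_k}(q') \to g^{*}(q') = G(q')$, the value $g_{n_k}(x)$ is sandwiched into a window of width less than $\varepsilon$ around $G(x)$ for large $k$. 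Hence $g_{n_k}(x) \to G(x)$ at every continuity point of $G$.

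\textbf{Step 4 (Diagonal over the countably many discontinuities).} Enumerate the discontinuity set of $G$ as $\{x_1, x_2, \ldots\}$. A second diagonal extraction applied to the bounded sequences $(g_{n_k}(x_m))_k$ produces a further subsequence along which $g_{n_k}(x_m)$ converges for every $m$. This final subsequence converges pointwise everywhere on $[a,b]$. Applying the monotone case simultaneously to $(v_{f_n})$ and $(v_{f_n} - f_n)$ (with a single diagonal to align both subsequences) completes the proof.

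The step I expect to be the most delicate is Step 3: one must carefully exploit monotonicity to squeeze $g_{n_k}(x)$ between values at rational points of $D$ approaching $x$, and the sandwich only closes because $G$ is continuous at $x$. The remaining diagonal argument in Step 4 is routine but indispensable, since without it the limit function $G$ would genuinely fail to capture the behavior of $g_{n_k}$ at its jump points.
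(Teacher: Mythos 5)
Your proof is correct, but it takes a genuinely different route from the paper. The paper states Theorem \ref{t:HellyClassic} as a known classical result (citing Helly and Natanson) and never reproves it directly; at the end of Section \ref{s:H} it only sketches two alternative arguments (Rosenthal's identification of $M_+([0,1],[0,1])$ as a compact subset of the Baire class 1 functions, and the first-countability argument from \cite{StSe}). The paper's own proof of the general version, Theorem \ref{t:GenH}, which holds for an arbitrary linearly ordered set $X$ in place of $[a,b]$, shares your first step --- the Jordan-type decomposition of Lemma \ref{l:BVprop}.3 reducing to monotone functions --- but then diverges completely: instead of a diagonal argument over a countable dense set, it embeds $X$ into a compact LOTS $Y$ via the Representation Theorem \ref{RepLem}, extends the monotone functions to continuous increasing functions on $Y$, shows that the resulting family contains no independent sequence (Theorem \ref{newprinciple}, resting on fragmentability of order preserving maps, Theorem \ref{monot}), and invokes the Rosenthal--Bourgain--Fremlin--Talagrand theory (Theorem \ref{f:sub-fr}) to extract a pointwise convergent subsequence. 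What your approach buys is elementarity and self-containedness; what it costs is generality: your Steps 2--4 lean essentially on the separability of $[a,b]$ (the countable dense set $D$ and the countability of the set of discontinuities of $G$), so the argument does not transfer to the nonseparable ordered sets covered by Theorems \ref{t:GenH}, \ref{corOfGenHelly1} and \ref{corOfGenHelly2}, which is precisely the gap the paper's independence/fragmentability machinery is designed to close.
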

  
  There are several generalized forms of Helly's theorem in the literature. Among other relevant references we mention \cite{FP, BC, Ch}.  
  In Section \ref{s:H} we give 
 two generalized versions of Helly's theorem for functions defined on abstract linearly ordered sets. Namely, Theorems \ref{corOfGenHelly1} and \ref{corOfGenHelly2} which are partial generalizations of Fuchino-Plewik \cite[Theorem 7]{FP} 
 %111216
  (full generalization under $\mathbf{s}=\aleph_1$ where $\mathbf{s}$ denotes the \emph{splitting number}  \cite{FP}, in particular this is the case under the Continuum Hypothesis)  
 and Belov-Chistyakov \cite[Theorem 1]{BC}. 
 	
 One of the main ideas in our approach is the independence 
 for families of real valued functions on a set $X$. This concept plays a major role in several research lines. For example, in Rosenthal's $l^1$-theorem, in Bourgain-Fremlin-Talagrand dichotomy and related topics, \cite{Ro,BFT,Tal, Dulst}. 
A relatively new direction when the (non)independence of families becomes very important is dynamical systems theory. Especially, tame systems and tame coding sequences. See \cite{Ko,Gl-tame,KL, GM-survey, GM-tame,GM-circular} and  references therein. 

We give 
a sufficient condition under which a given bounded family $F$ of real functions on a linearly ordered set $X$ is \emph{tame}, i.e., does not contain any independent sequence. We show in Theorem \ref{newprinciple} %(main result of the present work) 
that this happens for example when $F$ has a bounded total variation. 
This is easy in the particular case 
 when every member $f \in F$ is an order preserving function (Example \ref{ex:tame}.6). 
Another sufficient condition for the tameness of $F$ (for arbitrary set $X$) is the Grothendieck's double limit property (Example \ref{ex:tame}.3). 

We use a topological characterization of independent families of continuous functions on compact spaces, Theorem \ref{f:sub-fr}. It is a reformulation of a result 
presented in van Dulst's book \cite[Theorem 3.11]{Dulst} which can be traced back to results of Rosenthal \cite{Ro,Ros1} and Bourgain-Fremlin-Talagrand \cite{BFT,Tal}. It asserts that a family $F$ of bounded continuous functions on a compact space is tame iff each sequence in $F$ has a pointwise convergent subsequence in $\R^X$ iff the pointwise closure  $\cls (F)$ consists of the functions with the point of continuity property. It is equivalent to saying that each member of $\cls (F)$ is a \emph{fragmented} function (Definition \ref{def:fr}). This motivates Theorem \ref{monot}: every order preserving function on every linearly ordered set is fragmented. 
Next we deal with functions of bounded variation defined on abstract ordered spaces. 
By an analog of Jordan's decomposition (Lemma \ref{l:BVprop}.3) every function of a bounded variation is fragmented. 
Using results of Nachbin on ordered compactifications we give a representation theorem \ref{RepLem} which, as Theorem \ref{monot}, hopefully, has an independent interest.  

Some dynamical applications of Theorems \ref{monot} and \ref{newprinciple}  are presented in
%0709
 \cite{GM-tame,GM-circular}, 
 where we show that several important coding functions (for example, multidimensional Sturmian bisequences and finite coloring functions on the circle) 
on dynamical $G$-systems $X$ 
lead to functions $f: X \to \R$ the $G$-orbit $fG$ 
of which are tame families. 

 \vskip 0.2cm
\section{Fragmentability and independence} 

By $\cls$ we denote the closure operator. 
We use the usual definition of uniform structures using 
the entourages. 
We allow not necessarily Hausdorff uniform spaces. 
So, involving, in particular, the uniform structures induced by a pseudometric.  
``Compact" will mean ``compact and Hausdorff". Recall that any compact space $X$ admits a unique compatible uniform structure. Namely the set of all neighborhoods of the diagonal in $X \times X$.

 \vskip 0.2cm 
 
\subsection{Fragmented maps} 

\begin{defin} \label{def:fr}
	\cite{JOPV} Let $(X,\tau)$ be a topological space and
	$(Y,\mu)$ a uniform space. We say that a function $f: X \to Y$ is 
	%{\em $(\tau,\mu)$-fragmented\/} by a function $f: X \to Y$ 
	{\em fragmented\/} 
	if for every nonempty subset $A$ of $X$ and every entourage $\eps
	\in \mu$ there exists an open subset $O$ of $X$ such that $O \cap
	A$ is nonempty and the set $f(O \cap A)$ is $\eps$-small in $Y$. 
	Notation: $f \in {\mathcal F}(X,Y)$, whenever the
	uniformity $\mu$ is understood. If $Y=\R$ then we write simply
	${\mathcal F}(X)$.
\end{defin}
 
A function $f: X \to Y$ has the {\em point of continuity property}  
if for every closed nonempty subset $A$ of $X$ the restriction
$f|_A: A \to Y$ has a point of continuity. 
For compact $X$ and
(pseudo)metric space $(Y,d)$ it is equivalent to the fragmentability. 
If $X$ is Polish and $(Y,d)$ is a separable metric space then
$f: X \to Y$ is fragmented iff $f$ is a Baire class 1 function (i.e., the inverse image under $f$ of every open set is $F_\sigma$), \cite{Kech, GM-rose}. 

The topological concept of fragmentability comes from
Banach space theory. 
More facts about fragmentability see for example in \cite{N, JR, JOPV, me-fr, Me-nz, GM-rose, GM-tame}.

\vskip 0.2cm 
\subsection{Independent sequences of functions}
\label{s:ind}

	Let $f_n: X \to \R$ be a uniformly bounded sequence of functions on a \emph{set} $X$. Following Rosenthal \cite{Ro} we say that
	this sequence is an \emph{$l_1$-sequence} on $X$ if there exists a real constant $a >0$
	such that for all $n \in \N$ and choices of real scalars $c_1, \dots, c_n$ we have
	$$
	a \cdot \sum_{i=1}^n |c_i| \leq ||\sum_{i=1}^n c_i f_i||_{\infty}.
	$$

A Banach space $V$ is said to be {\em Rosenthal} if it does
not contain an isomorphic copy of $l_1$, or equivalently, if $V$ does not contain a sequence which is equivalent to an $l_1$-sequence.

	A sequence $f_n$ of	real valued functions on a set
	$X$ is said to be \emph{independent} (see \cite{Ro,Tal,Dulst}) if
	there exist real numbers $a < b$ such that
	$$
	\bigcap_{n \in P} f_n^{-1}(-\infty,a) \cap  \bigcap_{n \in M} f_n^{-1}(b,\infty) \neq \emptyset
	$$
	for all finite disjoint subsets $P, M$ of $\N$.

\begin{defin} \label{d:tameF} 
Let us say that a family $F$ of real valued (not necessarily, continuous)  functions on a set $X$ is {\it tame} if $F$ does not contain an independent sequence.
\end{defin}

 Such families play a major role in the theory of tame dynamical systems. See, for example, \cite{Ko,KL,GM-rose,GM-survey,GM-tame}. 

The following useful result is a reformulation of some known results.
It is based on results of Rosenthal \cite{Ro}, Talagrand \cite[Theorem 14.1.7]{Tal} and van Dulst \cite{Dulst}. See also \cite[Sect. 4]{GM-rose}.

\begin{thm} \label{f:sub-fr} \cite[Theorem 3.11]{Dulst} 
	Let $X$ be a compact space and $F \subset C(X)$ a bounded subset.
	The following conditions are equivalent:
	\begin{enumerate}
		\item
		$F$ does not contain an $l_1$-sequence. 
			\item $F$ is a tame family (does not contain an independent sequence). 
		\item
		Each sequence in $F$ has a pointwise convergent subsequence in $\R^X$.
		\item 
	The pointwise closure ${\cls}(F)$ of $F$ in $\R^X$ consists of fragmented maps,
	that is,
	${\cls}(F) \subset {\mathcal F}(X).$
	\end{enumerate}
\end{thm}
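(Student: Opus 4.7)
The strategy is to view the theorem as a repackaging of three classical results applied to the Banach space $C(X)$: a direct combinatorial passage from independent sequences to $l^1$-sequences, Rosenthal's $l^1$-dichotomy, and the Bourgain--Fremlin--Talagrand theorem on pointwise-compact families of Baire-$1$ functions. Concretely, I would establish (2) $\Rightarrow$ (1), (1) $\Leftrightarrow$ (3), and (3) $\Leftrightarrow$ (4). The remaining implication (1) $\Rightarrow$ (2) then follows immediately by contraposition, since any independent sequence in $F$ would already be an $l^1$-sequence by the first step.

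For (2) $\Rightarrow$ (1), given an independent $(f_n)$ with witnesses $a < b$ and arbitrary finite scalars $c_1, \dots, c_n$, I partition the indices by sign into $P = \{k : c_k > 0\}$ and $M = \{k : c_k < 0\}$. By the definition of independence there are points $x_+, x_- \in X$ with $f_k(x_+) > b$, $f_k(x_-) < a$ for $k \in P$ and $f_k(x_+) < a$, $f_k(x_-) > b$ for $k \in M$. A short computation gives $\sum_k c_k f_k(x_+) - \sum_k c_k f_k(x_-) \geq (b-a) \sum_k |c_k|$, hence $\|\sum_k c_k f_k\|_\infty \geq \tfrac{b-a}{2} \sum_k |c_k|$. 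For (1) $\Leftrightarrow$ (3) I would invoke Rosenthal's $l^1$-theorem in $C(X)$: every bounded sequence either has an $l^1$-subsequence or a weakly Cauchy subsequence. In $C(X)$, a bounded weakly Cauchy sequence is pointwise convergent (test with point-evaluations), and conversely a bounded pointwise convergent sequence is weakly Cauchy by the bounded convergence theorem applied through the Riesz identification of continuous linear functionals with regular Borel measures.

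Finally, (3) $\Leftrightarrow$ (4) rests on the Bourgain--Fremlin--Talagrand theorem: a pointwise-bounded, relatively countably compact family in $\R^X$ has its pointwise closure consisting of Baire-$1$ functions, which on compact $X$ coincides with fragmented functions; conversely, pointwise compactness of a family of fragmented functions yields sequential compactness via the tightness inherent in fragmentation. The main obstacle is that BFT is classically stated for Polish spaces, while here $X$ is only compact Hausdorff. I would handle this by restricting to a single countable subfamily $\{f_n\} \subseteq F$ and factoring it through the continuous evaluation map $q : X \to \tilde{X} \subseteq \R^\N$, $q(x) = (f_n(x))_n$; the image $\tilde{X} = q(X)$ is compact metrizable, so BFT applies to the coordinate projections $\pi_n|_{\tilde{X}}$. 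Every pointwise cluster point $h$ of $(f_n)$ on $X$ then factors as $h = \tilde{h} \circ q$ with $\tilde{h}$ fragmented on $\tilde{X}$, and fragmentability of $h$ on $X$ follows from the preservation of fragmentability under composition with a continuous surjection.
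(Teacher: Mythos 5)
The paper does not actually prove this theorem: it is stated as a quotation of van Dulst \cite[Theorem 3.11]{Dulst}, flagged as ``a reformulation of some known results'' of Rosenthal, Talagrand and van Dulst. So your proposal has to be judged on its own merits. Its overall architecture --- the elementary estimate linking independence to $l_1$-sequences, Rosenthal's $l_1$-dichotomy together with the identification of weakly Cauchy and bounded pointwise convergent sequences in $C(X)$, the Bourgain--Fremlin--Talagrand theorem, and the reduction of the non-metrizable compact $X$ to the metrizable factor $\tilde X = q(X) \subset \R^{\N}$ --- is the standard and correct way to assemble the result, and the computation giving the $l_1$-constant $(b-a)/2$ for an independent sequence is right.

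There is, however, a genuine logical gap: condition (2) is never shown to imply anything. The argument you place under the heading ``(2) $\Rightarrow$ (1)'' assumes an \emph{independent} sequence and produces an $l_1$-sequence; that is $\neg(2) \Rightarrow \neg(1)$, i.e.\ the implication (1) $\Rightarrow$ (2). The next sentence, which claims to obtain (1) $\Rightarrow$ (2) ``by contraposition,'' invokes exactly the same fact a second time. What is missing is the reverse passage: one must show that an $l_1$-sequence --- equivalently, via your (1) $\Leftrightarrow$ (3), a bounded sequence of continuous functions with no pointwise convergent subsequence --- admits an \emph{independent subsequence}. This is the substantive combinatorial core of the Rosenthal--Talagrand theory (the independent-families-of-pairs-of-sets argument, \cite[Theorem 14.1.7]{Tal}) and cannot be recovered by contraposing the elementary norm estimate. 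As written, your proof establishes only that (1), (3), (4) are mutually equivalent and that each implies (2). A secondary, fixable point: in (4) $\Rightarrow$ (3) you also need the transfer of fragmentability in the direction opposite to the one you quote, namely that $h = \tilde h \circ q$ fragmented on $X$ forces $\tilde h$ to be fragmented (Baire class 1) on $\tilde X$, so that BFT applies on the metrizable factor; this holds for continuous surjections of compact spaces but is not the same statement as stability of fragmentability under precomposition with a continuous map.
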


Let $X$ be a topological space and $F \subset l_{\infty}(X)$ be a norm bounded family. 
Recall that $F$ has Grothendieck's {\em Double Limit Property} (DLP) on $X$ if for every sequence $\{f_n\} \subset F$ and every sequence
$\{x_m\} \subset X$ the limits 
$$\lim_n \lim_m f_n(x_m) \ \ \  \text{and} \ \ \ \lim_m \lim_n f_n(x_m)$$
are equal whenever they both exist.

\begin{exs} \label{ex:tame} \ 
\ben  
\item A Banach space $V$ is Rosenthal iff 
every bounded subset $F \subset V$ is tame  (as a family of functions) on every bounded subset $X \subset V^*$ of the dual $V^*$.  
\item A Banach space is reflexive iff 
every bounded subset $F \subset V$ has DLP on every bounded subset 
$X \subset V^*$.
\item  ((DLP) $\Rightarrow$ Tame)  
Let $F$ be a bounded family of real valued (not necessarily, continuous) functions on a set $X$ such that $F$ has DLP. Then $F$ is tame.  
\item
The family $\Homeo [0,1]$, of all autohomeomorphisms of $[0,1]$, 
is tame (but not with DLP on $[0,1]$). 
\item 
The sequence of projections on the Cantor cube $$\{\pi_m: \{0,1\}^{\N} \to \{0,1\}\}_{m \in \N}$$ 
and the sequence of Rademacher functions
$$r_n: [0,1] \to \R, \ \ r_n(x):=\sgn (\sin (2^n \pi x))$$
both are independent (hence, nontame).  
\item Let $(X,\leq)$ be a linearly ordered set. Then any family $F$ of order preserving real functions is tame. Moreover there is no independent pair of functions in $F$. 
\een
\end{exs}
	\begin{proof}
		(1) Apply Theorem \ref{f:sub-fr} assuming $X=B_{V^*}$ is the weak$^*$  compact unit ball of the dual $V^*$. 	
			
		 (2) Use Grothendieck's double limit characterization 
		 of weak compactness (see for example \cite[Theorem A5]{BJM}) 
		 and a well known fact that a Banach space $V$ is reflexive iff its closed unit ball $B_V$ is weakly compact.   
		 
		 %Eberlein-\v{S}mulian theorem which implies that $F$ and $X$ are relatively sequentially precompact with respect to the weak topologies. 
		 (3) One may suppose that $X$ is a dense subset of a compact space $Y$ and $F \subset C(Y)$. Indeed, take for example the maximal compactification $Y:=\beta X$ of the discrete copy of $X$. 
		 Since $F$ has DLP on $X$ we may apply \cite[Appendix A4]{BJM} which 
		 yields that the pointwise closure $\cls(F)$ of $F$ in $\R^Y$ is a subset of $C(Y)$. Now Theorem \ref{f:sub-fr} ((4) $\Rightarrow$ (2)) shows that $F$ is tame on $Y$ and hence also on $X \subset Y$.  
		 
		%Let $\beta X$ be the maximal compactification of the discrete copy of $X$. For every $f: X \to \R$ from $F$ consider its (unique) continuous extension $f': \beta X \to \R$. 
		%Then $F' \subset C(\beta X)$ has DLP on $\beta X$ as it follows from 
		%\cite{BJM}. Moreover, the pointwise closure $\cls(F')$ of $F'$ consist of continuous maps. for $\beta X$.  That is, $\cls(F') \subset C(\beta X)$. Then by    we obtain that $F'$ is tame.  
		
		(4) Every homeomorphism $[0,1] \to [0,1]$ is either order preserving or order reversing. Now,  combine Helly's Theorem \ref{t:HellyClassic} and Theorem  \ref{f:sub-fr}  ((3) $\Rightarrow$ (2)).

		(5) These two examples are well known, \cite{Dulst}. 
		
		(6) Assuming that $f_1, f_2 \in F$ is an independent pair there exist $a < b$ and $x,y \in X$ such that $x \in f_1^{-1}(-\infty,a) \cap f_2^{-1}(b, \infty)$ and $y \in f_2^{-1}(-\infty,a) \cap f_1^{-1}(b, \infty)$. Then $f_1(x) < f_1(y)$ and $f_2(y) < f_2(x)$. Since $f_1$ and $f_2$ are order preserving and $X$ is linearly ordered we obtain that $x <y$ and $y < x$, a contradiction. 
	\end{proof}
	
 Note that in (1) and (2) the converse statements are true; as it follows from results of \cite{GM-tame} every tame (with DLP) family $F$ on $X$ can be represented, in a sense, on a Rosenthal (resp., reflexive) Banach space. 
Namely, there exist: a Rosenthal (resp., reflexive) space $V$, a pair $(\nu,\a)$ of bounded maps $\nu: F \to V, \ \alpha: X \to V^*$ such that 
$$
f(x)= \langle \nu(f), \a(x) \rangle
\ \ \ \forall \ f \in F, \ \ \forall \ x \in X.
$$
In other words, the following diagram commutes

$$\xymatrix{ F \ar@<-2ex>[d]_{\nu} \times X
	\ar@<2ex>[d]^{\a} \ar[r]  & \R \ar[d]^{id } \\
	V \times V^* \ar[r]  &  \R }
$$
 
%Here one may assume that $\alpha: X \to V^*$ is weak-star continuous where $X$ carries the weak topology induced by the family $F$.  

\vskip 0.2cm 
\section{Order preserving maps}

{\it Partial order} will mean a reflexive, antisymmetric and transitive relation.

\begin{defin} \label{d:ord} \
	(Nachbin \cite{Nach})
	Let $(X,\tau)$ be a topological space and $\leq$ 
	a partial order on the set $X$. The triple 
	$(X,\tau,\leq)$ is said to be a
	\emph{compact (partially) ordered space} if $(X,\tau)$ is a compact space and the graph of the relation $\leq$ is $\tau$-closed in $X \times X$.
\end{defin}

Recall that for every linearly ordered set $(X,\leq)$ the rays $(a,\to)$, $(\leftarrow,b)$ with $a,b \in X$ form a subbase for the
standard \emph{interval topology} $\tau_{\leq}$ on $X$. 
The triple $(X,\tau_{\leq},\leq)$ is said to be a \emph{linearly ordered topological space}
(LOTS). 
Sometimes we write just $(X,\leq)$, or even simply $X$, where no ambiguity can occur.

\begin{lem} \label{ordHausd}
Let $(X, \leq)$ be a LOTS. Then for any
 two distinct points $u_1 < u_2$ in $X$
there exist disjoint $\tau_{\leq}$-open neighborhoods
$O_1$ and $O_2$ in $X$ of $u_1$ and $u_2$ respectively such that $O_1 < O_2$,
meaning that $x < y$ for every $(y,x) \in O_2 \times O_1$. In particular, the graph of $\leq$ is closed in $(X,\tau_{\leq}) \times (X,\tau_{\leq})$.
\end{lem}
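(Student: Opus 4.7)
The plan is to split into two cases depending on whether the interval between $u_1$ and $u_2$ contains a point. Linearity of $\leq$ is doing all the work, and the ``in particular'' statement will be an almost immediate corollary of the main statement.

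First I would treat the easy case: suppose there exists $c \in X$ with $u_1 < c < u_2$. Then set $O_1 := (\leftarrow, c)$ and $O_2 := (c, \to)$. These are subbasic open sets, hence $\tau_{\leq}$-open; they contain $u_1$ and $u_2$ respectively; they are disjoint; and for any $x \in O_1$ and $y \in O_2$ we have $x < c < y$, giving the separation $O_1 < O_2$.

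Next I would handle the complementary case where no $c \in X$ satisfies $u_1 < c < u_2$. Here I set $O_1 := (\leftarrow, u_2)$ and $O_2 := (u_1, \to)$, so that $u_1 \in O_1$ and $u_2 \in O_2$. For disjointness, any $z \in O_1 \cap O_2$ would satisfy $u_1 < z < u_2$, contradicting the case assumption. For $O_1 < O_2$, pick $x \in O_1$ and $y \in O_2$; since they lie in disjoint sets, $x \neq y$, so by linearity either $x < y$ (which is what we want) or $y < x$. The latter would give $u_1 < y < x < u_2$, again contradicting the case assumption. Hence $x < y$, as required.

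Finally, for the ``in particular'' part, I would fix $(a,b) \notin \{(x,y) : x \leq y\}$, which by linearity means $b < a$. Applying the main statement to $u_1 := b < u_2 := a$ yields disjoint open neighborhoods $O_1 \ni b$ and $O_2 \ni a$ with $O_1 < O_2$. Then $O_2 \times O_1$ is an open neighborhood of $(a,b)$ in $(X,\tau_{\leq}) \times (X,\tau_{\leq})$, and for any $(a',b') \in O_2 \times O_1$ we have $b' < a'$, so $(a',b')$ lies outside the graph of $\leq$. This exhibits the complement of the graph as open, so the graph is closed. The only mildly delicate point is the second case, where one must be careful to invoke linearity twice: once to force $x \neq y$ into a comparison, and once to rule out $y < x$ via the no-midpoint hypothesis.
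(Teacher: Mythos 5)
Your proof is correct and follows essentially the same route as the paper: the same case split on whether the open interval $(u_1,u_2)$ is empty, with the same choices of subbasic rays in each case. You simply spell out the disjointness, the $O_1 < O_2$ verification, and the deduction of closedness of the graph, all of which the paper leaves implicit.
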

\begin{proof} 
	If the interval $(u_1,u_2)$ is empty then take $O_1:=(\leftarrow,u_2)$ and $O_2:=(u_1,\rightarrow)$. If $(u_1,u_2)$ is nonempty then choose $t \in (u_1,u_2)$ and define $O_1:=(\leftarrow,t)$,  $O_2:=(t,\rightarrow)$.
	
	\end{proof}

 \begin{cor} \label{c:GLOTS} 
 Any \emph{compact} LOTS is a
compact ordered space in the sense of Nachbin (Definition \ref{d:ord}). 
Conversely, for every compact ordered space $(X,\tau,\leq)$, where $\leq$ is a linear order, necessarily $\tau$ is the interval topology of $\leq$. 
 \end{cor}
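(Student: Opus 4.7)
The plan is to treat the two claims separately. For the first, any compact LOTS $(X, \tau_\leq, \leq)$ is already compact, so the only thing to check is that the graph of $\leq$ is $\tau_\leq \times \tau_\leq$-closed, and this is delivered directly by the last sentence of Lemma \ref{ordHausd}. No further work is needed here.

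For the converse direction, suppose $(X,\tau,\leq)$ is a compact ordered space in the sense of Nachbin with $\leq$ a linear order. I would first show that $\tau_\leq \subseteq \tau$. Fix $a \in X$. Since the graph $G = \{(x,y) : x \leq y\}$ is $\tau \times \tau$-closed, its horizontal and vertical sections at $a$, namely $\{x : x \leq a\}$ and $\{x : a \leq x\}$, are $\tau$-closed. By antisymmetry and linearity their complements are exactly the rays $(a,\to) = \{x : a < x\}$ and $(\leftarrow,a) = \{x : x < a\}$, so every subbasic set of $\tau_\leq$ is $\tau$-open, giving $\tau_\leq \subseteq \tau$.

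Next I would argue that the two topologies coincide by the standard compact-to-Hausdorff trick. The identity map $\mathrm{id}\colon (X,\tau) \to (X,\tau_\leq)$ is continuous by the inclusion just established. The target $(X,\tau_\leq)$ is Hausdorff because Lemma \ref{ordHausd} produces disjoint $\tau_\leq$-open neighborhoods for any two distinct points. The source $(X,\tau)$ is compact by assumption. Hence this continuous bijection from a compact space onto a Hausdorff space is a homeomorphism, and therefore $\tau = \tau_\leq$.

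I do not expect a serious obstacle. The only point that needs a little care is using linearity at the right moment to identify the complement of $\{x : x \leq a\}$ with the open ray $(a,\to)$; without linearity one would only get $\{x : x \not\leq a\}$, which in a merely partial order is strictly larger than the open ray and could fail to be expressible from interval-topology subbasics.
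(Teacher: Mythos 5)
Your proposal is correct and follows essentially the same route as the paper: the first part is read off from Lemma \ref{ordHausd}, and the converse is proved by showing the subbasic rays are $\tau$-open (hence $\tau_{\leq}\subseteq\tau$) and then invoking the compact-versus-Hausdorff comparison of topologies. You merely spell out in more detail the step the paper leaves implicit, namely that the closed sections of the graph together with linearity identify the complements with the open rays.
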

 \begin{proof}
 The first part is obvious by Lemma \ref{ordHausd}. 
 For the second part observe that the $\tau$-closedness of the linear order $\leq$ in $X \times X$ 
 implies that the subbase intervals $(a,\to)$, $(\leftarrow,b)$ (with $a,b \in X$) are $\tau$-open. Whence, $\tau_{\leq} \subseteq \tau$. Since $\tau_{\leq}$ is a Hausdorff topology and $\tau$ is a compact topology we can conclude that $\tau_{\leq} = \tau$. 
 
 \end{proof}
 
 A map $f: (X,\leq) \to (Y,\leq)$ between two (partially) ordered sets is said to be \emph{order preserving} or \textit{increasing} 
if $x \leq x'$ implies $f(x) \leq f(x')$ for every $x,x' \in X$.
 
 %!!!!!!!!!!!!!!  (classical) any order preserving map $[0,1] \to \R$ is Baire 1 (hence, fragmented).

Let $(X,\leq)$ and $(Y,\leq)$ be partially ordered sets.  
Denote by $M_+(X,Y)$ the set of all order preserving maps $X \to Y$. 
For $Y=\R$ we use the symbol 
$M_+(X,\leq)$ or $M_+(X)$. 
Since the order of $\R$ is closed in $\R^2$, we have $\cls(M_+(X)) = M_+(X)$. That is, $M_+(X)$ is pointwise closed in $\R^X$. 
%If $f \in M_+(X)$ and $f(X) \subset [c,d]$ we write $f \in M_+[c,d]$. 
If $(Y,\tau,\leq)$ is a compact partially ordered space then 
$M_+(X,Y)$ is pointwise closed in $Y^X$. 
For compact partially ordered spaces $X,Y$ we define also $C_+(X,Y)$ the set 
of all continuous and increasing maps $X \to Y$. 
%Similarly we define also $C_+(Y,[c,d])$. 

Fundamental results of Nachbin \cite[p. 48 and 113]{Nach} imply the following

\begin{lem} \label{l:Nachbin} \emph{(Nachbin \cite{Nach})} 
	Let $(Y,\tau,\leq)$ be a compact partially ordered space (Definition \ref{d:ord}). Then the set $C_+(Y,[0,1])$  separates points of $Y$. 
\end{lem}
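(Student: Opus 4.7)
The plan is to reduce the separation-of-points statement to Nachbin's order-theoretic analogue of Urysohn's lemma, which is the substantive content referenced in \cite{Nach}. First I would fix two distinct points $x, y \in Y$. Since $\leq$ is antisymmetric, the condition $x \neq y$ forces at least one of $x \not\leq y$ or $y \not\leq x$; and replacing a candidate $f$ by $1-f$ interchanges the two alternatives, so I may assume without loss of generality that $x \not\leq y$.

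Next I would introduce two natural closed sets associated with this pair, namely the principal down-set and up-set
$$D := \{z \in Y : z \leq y\}, \qquad U := \{z \in Y : x \leq z\}.$$
Both are $\tau$-closed because the graph of $\leq$ is $\tau$-closed in $Y \times Y$ by Definition \ref{d:ord}: $D$ is the preimage of this graph under the continuous map $z \mapsto (z,y)$, and $U$ under $z \mapsto (x,z)$. By construction $D$ is a decreasing set containing $y$ and $U$ is an increasing set containing $x$. They are disjoint, since any $z \in D \cap U$ would satisfy $x \leq z \leq y$, contradicting $x \not\leq y$.

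The final step is to invoke Nachbin's theorem (\cite{Nach}, pp.~48 and 113): every compact partially ordered space is \emph{normally ordered}, and hence admits the following order-Urysohn lemma: for any disjoint pair consisting of a closed decreasing set $D$ and a closed increasing set $U$, there exists a continuous increasing function $f \colon Y \to [0,1]$ with $f|_D \equiv 0$ and $f|_U \equiv 1$. Applying this to the $D$ and $U$ above produces $f \in C_+(Y,[0,1])$ with $f(y)=0 \neq 1=f(x)$, so $f$ separates $x$ from $y$, as required.

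The main obstacle is hidden in the black box being invoked: proving the order-Urysohn lemma requires first establishing that compact ordered spaces are normally ordered (a nontrivial compactness argument that thickens the closed graph of $\leq$ by monotone open neighborhoods) and then running a dyadic insertion argument in the style of the classical Urysohn lemma, but using increasing open sets at every stage. Since we are allowed to quote these foundational results of Nachbin, the lemma reduces to the two elementary constructions above.
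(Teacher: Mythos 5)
Your argument is correct and matches the paper's approach: the paper gives no proof at all, simply asserting that the lemma follows from Nachbin's results on pp.~48 and 113 of \cite{Nach}, which are precisely the normal orderedness of compact ordered spaces and the order-Urysohn separation of a disjoint closed decreasing set and closed increasing set that you invoke after the elementary reduction to the pair $D$, $U$. One cosmetic slip: $1-f$ is decreasing rather than increasing, so it leaves $C_+(Y,[0,1])$; the reduction to the case $x \not\leq y$ should instead be justified by simply relabeling $x$ and $y$, since any $f$ with $f(x) \neq f(y)$ separates the pair regardless of which point gets the value $1$.
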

 
The following Theorem is a slightly 
generalized version of a recent result from \cite{GM-tame}. 
Its prototype is a well known fact that every monotonic function $[a,b] \to \R$ is a Baire 1 function. 

\begin{thm} \label{monot} \cite{GM-tame} Let $ (X,\leq)$ be a linearly ordered set and 
$(Y,\tau,\leq)$  
a compact partially ordered space. Then  
 every order preserving map $f: X \to (Y,\mu)$ is fragmented, where $\mu$ is the unique compatible uniformity on the compact space $Y$ and $X$ carries the interval topology. 
 \end{thm}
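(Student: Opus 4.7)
\emph{Proof plan.} My plan is to reduce the theorem to the special case $Y = [0,1]$ using Nachbin's lemma and then to prove directly the following key fact: every bounded order-preserving $g : X \to \R$ on a LOTS is fragmented.

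\textbf{Step 1 (reduction to $Y=[0,1]$).} By Lemma \ref{l:Nachbin}, $\Hcal := C_+(Y,[0,1])$ separates the points of $Y$, so the evaluation map embeds $Y$ as a compact subspace of $[0,1]^{\Hcal}$ and the unique compatible uniformity $\mu$ on $Y$ has a subbase of entourages of the form $\{(y_1,y_2) : |h(y_1)-h(y_2)|<\eps\}$ for $h \in \Hcal$ and $\eps>0$. A routine iteration shows that $f$ is $\mu$-fragmented as soon as each composition $h\circ f: X \to [0,1]$ is fragmented: given finitely many $h_1,\dots,h_n \in \Hcal$, $\eps>0$, and a nonempty $A \subseteq X$, apply fragmentation of $h_1\circ f$ to $A$ to obtain open $O_1$ with $A_1:=O_1\cap A$ nonempty of $(h_1\circ f)$-diameter $<\eps$; apply fragmentation of $h_2\circ f$ to $A_1$; and so on, terminating with $O := O_1\cap\cdots\cap O_n$ so that $O\cap A=A_n$ is nonempty with $(h_i\circ f)$-diameter $<\eps$ for every $i$. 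Each $h\circ f$ is bounded and order-preserving (composition of two order-preserving maps), so the proof reduces to the key lemma below.

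\textbf{Step 2 (the key lemma).} Let $g : X \to \R$ be bounded and order-preserving, and fix nonempty $A \subseteq X$ and $\delta>0$. I set $c := \inf_{x\in A} g(x)$ and
\[
B := \{x \in X : g(x) < c + \delta/2\},
\]
which is nonempty (by the definition of $c$) and downward-closed (by monotonicity). A preliminary observation is that a downward-closed subset of a LOTS \emph{without a maximum element} is automatically open: for any $b\in B$ one can pick $b'\in B$ with $b'>b$, and then $b \in (\leftarrow,b') \subseteq B$ by downward-closedness. So either $B$ is open (in which case $O:=B$ works, since $g(O\cap A) \subseteq [c,c+\delta/2)$), or $B$ has a maximum $\beta$. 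In the latter case, if $\beta\notin A$ or $A$ has an element strictly below $\beta$, then $O:=(\leftarrow,\beta)$ is open, meets $A$, and each $x\in O\cap A$ satisfies $c\le g(x)\le g(\beta)<c+\delta/2$. The only residual sub-case is $\beta=\min A$; here I restart the argument with $A_1:=A\setminus\{\beta\}$ (the trivial $A_1=\emptyset$ being handled by any open neighborhood of $\beta$), noting that $A_1 \subseteq (\beta,\to)$ and $c_1 := \inf_{A_1} g \geq c+\delta/2$, because $A_1 \cap B = \emptyset$. Each residual iteration thus raises the running infimum by at least $\delta/2$, so boundedness of $g$ forces termination after at most $\lceil 2\|g\|_\infty/\delta\rceil$ iterations. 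Once a resolved case at some step $k$ produces open $O\subseteq X$ with $O\cap A_k\neq\emptyset$ of small $g$-diameter, I bubble up by $O' := O \cap (\beta_{k-1}, \to)$ (open, and, since $A_k = A \cap (\beta_{k-1},\to)$, satisfying $O' \cap A = O \cap A_k$), which completes the proof for the original $A$.

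\textbf{Main obstacle.} The reduction is routine; all the content sits in the residual sub-case of the key lemma, which captures the failure of Dedekind-completeness at $\beta$: the down-set $B$ has a maximum $\beta$ with no immediate successor in $X$, $\beta=\min A$, and every open neighborhood of $\beta$ already contains $A$-points on which $g$ differs from $g(\beta)$ by at least $\delta/2$. One cannot then pick an open neighborhood of $\beta$ with small $g$-oscillation; the iterative peeling-off of such $\beta$'s is the essential trick, and the boundedness of $g$ is exactly what forces that process to terminate.
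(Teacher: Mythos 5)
Your proof is correct, and the core of it takes a genuinely different route from the paper's. The first reduction (to $Y=[0,1]$ via Nachbin's Lemma \ref{l:Nachbin}) is the same in both arguments, except that you prove the ``point-separating family of compositions implies fragmentability'' step by an explicit finite iteration over subbasic entourages, where the paper simply cites \cite[Lemma 2.3.3]{GM-rose}. After that the two proofs diverge: the paper first embeds the LOTS $X$ into a compact LOTS $Z$, extends $f$ to an increasing $F(z)=\sup_{x\leq z}f(x)$, and then argues by contradiction on the compact space, invoking \cite[Lemma 3.7]{Dulst} to produce a closed set $K$ in which both $\{f\leq a\}$ and $\{b\leq f\}$ are dense, and finally uses the order-separation Lemma \ref{ordHausd} to contradict monotonicity. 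You instead prove fragmentability of a bounded increasing $g:X\to\R$ directly on the arbitrary LOTS, by analyzing the down-set $B=\{g<c+\delta/2\}$ (open unless it has a maximum $\beta$), isolating the genuinely problematic case $\beta=\min A$, and peeling off such minima; boundedness of $g$ caps the number of iterations since each one raises $\inf_A g$ by $\delta/2$, and the identity $A_k=A\cap(\beta_{k-1},\to)$ lets you pull the final open set back to the original $A$. I checked the case analysis and the termination and bubbling-up steps; they are sound. What each approach buys: the paper's is shorter given its imported machinery and reuses tools (Dulst's dichotomy, order compactification of a LOTS) that recur elsewhere in the paper, at the cost of having to verify that the sup-extension is well defined and increasing and that fragmentability is hereditary; yours is self-contained, constructive rather than by contradiction, needs no compactness of $X$ and no appeal to \cite{Dulst} or \cite{GM-rose}, and even yields an explicit bound on the number of peeling steps.
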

 \begin{proof}  
 	First note that the question can be reduced to the case of $Y:=[0,1]$. Indeed, 
 Lemma  \ref{l:Nachbin} implies that
there exists a point separating family  $\{q_i: Y \to [0,1]\}_{i \in I}$ of order preserving \emph{continuous}  maps.
Clearly the composition of two order preserving maps is order preserving.
Now by \cite[Lemma 2.3.3]{GM-rose} 
it is enough to show that every map $q_i \circ f$ is fragmented.
So we can assume that our order preserving function is 
of the form $f: X \to Y=[0,1]$.
We have to show that $f$ is fragmented. 

Now observe that one may assume that $X$ is compact. Indeed, for every LOTS $X$ (with its interval topology) there exists a compact LOTS $Z$ and an embedding of topological spaces and ordered sets $i: X \hookrightarrow Z$ (see for example, \cite[Exercise 3.12.3]{Eng}). 
%Take for example the so-called the Nachbin's compactification. 
Now define 
$$F: Z \to [0,1], \ F(z):=\sup \{f(x): x \in X, x \leq z\}$$
 for every $z \in Z$ when $\{x \in X: x \leq z\}$ is nonempty and 
 $F(z):=0$ if $\{x \in X: x \leq z\}$ is empty. 
Then $F$ is a well defined increasing function on $Z$ which
extends $f: X \to [0,1]$. Note that the fragmentability is a hereditary property. 
The fragmentability of $F: Z \to [0,1]$ guarantees the fragmentability of $f: X \to [0,1]$. So, below we assume that $X$ is compact.  

Assume the contrary that  $f: X \to Y=[0,1]$ is not fragmented. 
Then by \cite[Lemma 3.7]{Dulst} (using that $X$ is compact) 
there exists a closed subset $K \subset X$ and $a <b$ in $\R$ such that $$K \cap \{x\in X: \ f(x) \leq a\}, \ \ \ K \cap \{x\in X: \ b \leq f(x)\}$$
are both dense in $K$.

%It follows that any relatively open subset in $K$ is infinite.
Choose arbitrarily two distinct points $k_1 < k_2$ in $K$.
By Lemma \ref{ordHausd} one can choose disjoint open neighborhoods
$O_1$ and $O_2$ in $X$ of $k_1$ and $k_2$ respectively such that $O_1 < O_2$.

By our assumption we can choose $x \in O_1 \cap K$ such that $b \leq f(x)$. Similarly, there exists
$y \in O_2 \cap K$ such that $f(y) \leq a$. Since $a<b$ we obtain
$f(y) < f(x)$. On the other hand, $x < y$ (because $O_1 < O_2$),
contradicting our assumption that $f$ is order preserving.

\end{proof}

 %Note that every LOTS is GO. 

The following result is an adaptation of some well known facts from the theory of ordered compactifications (see for example Fedorchuk \cite{Fed}, or Kaufman \cite{Kau}). 
%A significant difference is that we allow not necessarily continuous functions on $X$. That is, 
We consider not necessarily continuous ``compactification" $\nu: X \to Y$ of a linearly ordered set $X$ 
as an increasing map into a compact LOTS $Y$. 
This is equivalent to saying that we consider order compactifications $X \to Y$ 
 of the discrete copy of $X$ (we do not require 
topological embeddability for compactification maps). 

\begin{thm} \label{RepLem} \emph{(Representation theorem)} 
	Let $(X,\leq)$ be a linearly ordered set. For any family 
	$\Gamma:=\{f_i: X \to [c,d]\}_{i \in I}$ (with $c <d$) of 
order preserving (not necessarily continuous) functions 
 there exist: a compact LOTS $(Y,\leq)$, an order preserving dense  
		injection $\nu:X \hookrightarrow Y$ and a family $\{F_i: Y \to [c,d]\}_{i \in I}$ of $\tau_{\leq}$-continuous increasing functions such that $f_i=F_i \circ \nu$ $\forall i \in I$. 
\end{thm}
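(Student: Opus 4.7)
My plan is to construct $Y$ as a refinement of the Dedekind--MacNeille completion of $X$: every Dedekind cut of $X$ across which some $f_i$ jumps is to be split into two consecutive points of $Y$, so that each such jump is absorbed into an immediate predecessor/successor pair (topologically harmless in a LOTS) and the $f_i$ thereby extend continuously. This is the standard strategy in the theory of ordered compactifications (Fedorchuk, Kaufman).

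\medskip

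\emph{Construction of $Y$.} I first harmlessly adjoin a formal minimum $\bot$ and maximum $\top$ to $X$ (putting $f_i(\bot):=c$, $f_i(\top):=d$), so without loss of generality $X$ has endpoints. Call a Dedekind cut $(A,B)$ of $X$ (with $A\cup B=X$, $A\cap B=\emptyset$, $A$ a nonempty down-set) a \emph{jump cut} if $\sup_{y\in A}f_i(y)<\inf_{y\in B}f_i(y)$ for some $i\in I$. I form $Y$ from $X$ by adjoining: (i) for each jump cut with $x=\max A$ existing but $\min B$ not, a new point $x^+$ placed immediately above $x$; (ii) dually, for each jump cut with $x=\min B$ existing but $\max A$ not, a new point $x^-$ immediately below $x$; (iii) for each non-jump \emph{gap} cut (neither $\max A$ nor $\min B$ exists), one new point $\xi_{(A,B)}$; (iv) for each jump gap cut, two new consecutive points $\xi^-_{(A,B)}<\xi^+_{(A,B)}$. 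With the obvious linear order, every Dedekind cut of $X$ is realized by an element of $Y$, and $Y$ has top and bottom; hence $Y$ is Dedekind-complete with endpoints, and therefore a compact LOTS in its interval topology (standard fact). The inclusion $\nu:X\hookrightarrow Y$ is clearly order-preserving and injective; density of $\nu(X)$ follows because each adjoined point lies at a cut where, by the very choice of split, $X$ accumulates to it from the relevant side with no obstructing immediate neighbor in $Y$.

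\medskip

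\emph{Extension of $f_i$ and continuity --- the crux.} I extend each $f_i$ to $F_i:Y\to[c,d]$ by $F_i|_X=f_i$; $F_i(x^+):=\inf_{y\in B}f_i(y)$ and $F_i(x^-):=\sup_{y\in A}f_i(y)$ for the corresponding cut in cases (i), (ii); $F_i(\xi_{(A,B)}):=\sup_A f_i=\inf_B f_i$ in case (iii); and $F_i(\xi^-_{(A,B)}):=\sup_A f_i$, $F_i(\xi^+_{(A,B)}):=\inf_B f_i$ in case (iv). A direct case check shows each $F_i$ is increasing. The main difficulty is continuity: I must verify that $F_i(\xi)=\sup\{F_i(\eta):\eta<\xi\}$ at every $\xi\in Y$ that is left-accumulated in $Y$, and dually on the right. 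By construction, every surviving jump of $F_i$ sits between an immediate predecessor and an immediate successor in $Y$, where the order topology permits a jump without loss of continuity; any other jump would correspond to a Dedekind cut of $X$ not accounted for in clauses (i)--(iv), which is impossible. The argument reduces to an elementary case analysis across the five types of points of $Y$ (the original points of $X$, the $x^\pm$, the $\xi_{(A,B)}$, and the $\xi^\pm_{(A,B)}$), which is the essential bookkeeping step but presents no conceptual obstacle once the splits are in place.
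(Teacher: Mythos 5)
Your construction is genuinely different from the paper's. The paper enlarges $\Gamma$ to all of $M_+(X,[0,1])$ (which separates points via characteristic functions of up-sets), embeds $X$ into the cube $[0,1]^I$ by the diagonal map $\nu(x)(i)=f_i(x)$, and takes $Y:=\cls(\nu(X))$; the coordinatewise partial order is closed, restricts to a linear order on $Y$, the $F_i$ are just the coordinate projections (so continuity and monotonicity are free), and the only real work is checking linearity of the order on the closure and that the product topology equals the interval topology (via Corollary \ref{c:GLOTS}). You instead build $Y$ ``by hand'' as a Dedekind--MacNeille completion with jump cuts doubled, in the Fedorchuk--Kaufman spirit that the paper itself cites as background. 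Your route avoids the point-separation trick and produces a concrete, minimal $Y$, but it shifts all the work onto the continuity/monotonicity case analysis that you defer as bookkeeping; I checked the cases and they do go through, so this is acceptable as a sketch, whereas the paper's soft argument gets compactness and continuity essentially for free at the cost of an enormous ambient cube.

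Two small repairs are needed in your version. First, adjoining $\bot,\top$ with values $f_i(\bot):=c$, $f_i(\top):=d$ can destroy density of $\nu(X)$: if $X$ already has a minimum, $\bot$ is an isolated point of $Y$ outside $\cls(\nu(X))$, and even if not, the assignment $f_i(\bot)=c$ may create a spurious jump cut at $(\{\bot\},X)$ forcing a $\bot^+$ that again isolates $\bot$. You should adjoin endpoints only when $X$ lacks them and set $f_i(\bot):=\inf_X f_i$, $f_i(\top):=\sup_X f_i$, so the new endpoints are genuine limits of $X$. Second, the statement ``every Dedekind cut of $X$ is realized by an element of $Y$, hence $Y$ is Dedekind-complete'' conflates cuts of $X$ with cuts of $Y$; the correct argument is that any cut of $Y$ restricts to a cut of $X$, and the finitely many points of $Y$ sitting in that slot always supply a $\max$ of the lower part or a $\min$ of the upper part. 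Both are routine fixes and do not affect the overall soundness of your approach.
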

\begin{proof} 
	For simplicity we assume that $[c,d]=[0,1]$. %Hence, $\Gamma \subset M_+(X,[0,1])$. 
	Without restriction of generality one may assume that 
	$\Gamma = M_+(X,[0,1])$. So,  
	$\Gamma$ separates points of $X$. Indeed, 
	for every $a <b$ in $X$ consider the characteristic function $\chi_A: X \to [0,1]$ of $A:=\{x: b \leq x \}$. Then $\chi_A \in M_+(X,[0,1])$ and 
	separates $a$ and $b$.  
	Consider the diagonal map 
	$$\nu: X \to Y \subset [0,1]^I, \ \ \nu(x)(i)=f_i(x).$$ 
	Since $\Gamma$ separates the points, $\nu$ is an injection. 
	We will identify $X$ and the dense subset $\nu(X)$ in the compactum $Y:=\cls(\nu(X))$.  
	Let us show that $Y$ admits a naturally defined linear order which extends the order of $\nu(X)=X$. Consider the natural partial order $\g$ on $[0,1]^I$ 
	$$
	u \leq v \Leftrightarrow u_i \leq v_i \ \ \forall i \in I.
	$$ 
	It is easy to see that $\g$ is a partial order. 
	Clearly, it induces the original order on $X \subset [0,1]^I$.
	Indeed, if $x \leq x'$ in $X$ then 
	$x_i=f_i(x) \leq x_i'=f_i(x')$ for every $i \in I$ because each $f_i$ is increasing. So, we obtain that $(x,x') \in \g$. 
	Conversely, if $(x,x') \in \g$ and $x \neq x'$ then $f_i(x) \leq f_i(x')$ for every $i \in I$. Since $\Gamma$ (by our assumption) separates the points we obtain that $f_i(x) < f_i(x')$ 
	for some $i \in I$. 
	Since the order in $X$ is linear and $f_i$ is increasing we necessarily have $x <x'$. 
	
	 	\vskip 0.2cm 
	
	\noindent \textbf{Claim 1:} $\g \subset [0,1]^I \times [0,1]^I$ is a closed partial order on $[0,1]^I$.
	\vskip 0.2cm 
 We show that $\g$ is closed. Let $(u,v) \notin \g$. By definition this means that there exists $i \in I$ such that 
$u_i > v_i$ in $[0,1]$. Choose disjoint open (in $[0,1]$) 
intervals $U$ and $V$ of $u_i$ and $v_i$. Consider the basic open neighborhoods 
$A:=\pi_i^{-1}(U)$ and $B:=\pi_i^{-1}(V)$, where $\pi_i:[0,1]^I \to [0,1]$ is the $i$-th projection. Then $A \times B$ is a neighborhood of the point $(u,v)$ in $[0,1]^I \times [0,1]^I$ such that for every $(a,b) \in A \times B$ we have $a_i > b_i$. Hence, $(a,b) \notin \g$.     
	
	\vskip 0.2cm
	\noindent 	\textbf{Claim 2:} The restriction of $\g$ on $Y=\cls(X)$ is a linear order. 
	
	\vskip 0.2cm 
	
	Indeed, let $u,v$ be distinct 
	elements of $Y$. Then there exists $i \in I$ such that $u_i \neq v_i$. Say, 
	$$u_i=f_i(u) < v_i=f_i(v).$$ %(or, vice versa). 
	Choose a real number $c$ 
	such that $f_i(u) < c < f_i(v)$ and define two open neighborhoods   
	$$
	u \in A:=\{y \in Y: y_i < c\} \ \ \ v \in B:=\{y \in Y: c < y_i\}
	$$
	of $u$ and $v$ in $Y$. Since $X$ is dense in $Y$ and $A, B$ are open subsets in $Y$, we have $A \subset \cls(A \cap X)$ and $B \subset \cls(B \cap X)$. 
	For every $a \in A \cap X, \ b \in B \cap X$ we obviously have 
	$$
	f_i(a) < c < f_i(b).
	$$
	Then necessarily, $a < b$ because the order on $X$ is linear and $f_i$ is increasing. 
	%Then $f(a) \leq f(b)$ for every $f \in \Gamma$. 
	Since such $a$ approximates $u \in A$ and $b$ approximates $v \in B$ we obtain by Claim 1 that $u \leq v$. 
	
	\vskip 0.2cm
	
	\noindent 	\textbf{Claim 3:} Every restricted projection $F_i: Y \to [0,1]$   
	is a continuous (regarding the product topology) $\gamma$-increasing function and $F_i|_X=f_i$ $\forall i \in I$. 
	
	\vskip 0.2cm
	
	$F_i$ is increasing by definition of $\g$. The continuity is trivial by definition of the product topology.   
	Finally, $f_i$ is a restriction of $F_i$ on $X$ by the definition of diagonal map. 
	
	\vskip 0.2cm
	
	Now it is enough to show the last claim. 
	\vskip 0.2cm
	
	\noindent 	\textbf{Claim 4:} The product topology $\tau$ on $Y$ coincides with the interval topology $\tau_{\g}$. 
	
	\vskip 0.2cm
	This follows from the second part of Corollary \ref{c:GLOTS} taking into account that the linear order of $Y$ is closed in $Y \times Y$
	regarding the product topology (use Claim 1 and the closedness of $Y$ in  $[0,1]^I$).  
	
\end{proof}

\section{Functions of bounded variation on an ordered set} 

In the following definition we consider a natural generalization of the classical concept (well known for the interval $X=[a,b]$) of bounded variation.

\begin{defin} \label{d:BV} 
	Let $(X, \leq)$ be a linearly ordered set. We say that a bounded real valued function $f: X \to \R$ has variation not greater than $r$ if 
	\begin{equation} \label{BV1}
	\sum_{i=0}^{n-1} |f(x_{i+1})-f(x_i)| \leq r
	\end{equation} 
	for every choice of $x_0 \leq x_1 \leq \cdots \leq x_n$ in $X$. 
	The least upper bound of all such possible sums is the  {\it variation} of $f$. Notation:  $\Upsilon(f)$. If  $\Upsilon(f) \leq r$ then we write $f \in BV_r(X)$. If $f(X) \subset [c,d]$ for some $c \leq d$ then we write also $f \in BV_r(X,[c,d])$. One more notation: $BV(X):=\cup_{r>0} BV_r(X)$. 
\end{defin}

%As before, denote by $M_+(X,[c,d])$ the set of all increasing functions $X \to [c,d]$. 

\begin{lem} \label{l:BVprop} 
	Let $(X, \leq)$ be a linearly ordered set.
	\ben 
	\item 
	$BV_r(X,[c,d])$ is a pointwise closed (hence, compact) subset of $[c,d]^X$.
	\item $M_+(X,[c,d])$ is a closed  subset of $BV_r(X,[c,d])$ 
	for every $r \geq d-c$. 
	\item 
	(Analog of Jordan's decomposition)  Every function $f \in BV(X)$  is a difference $f=u-v$ of two order preserving bounded functions $u,v: X \to \R$. 
	\een
\end{lem}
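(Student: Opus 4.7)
For part (1), the plan is to realize $BV_r(X,[c,d])$ as an intersection of sets each cut out by a single inequality depending only on finitely many coordinates. For any finite chain $x_0\leq\cdots\leq x_n$ in $X$, the evaluation map $g\mapsto \sum_{i=0}^{n-1} |g(x_{i+1})-g(x_i)|$ is continuous on $[c,d]^X$ with the product (pointwise) topology, so its sublevel set at height $r$ is closed. Taking the intersection over all such chains gives $BV_r(X,[c,d])$, which is therefore closed in the compact Tychonoff cube $[c,d]^X$, hence compact.

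For part (2), the inclusion $M_+(X,[c,d])\subseteq BV_r(X,[c,d])$ when $r\geq d-c$ is immediate: for any increasing $f$ and any chain $x_0\leq\cdots\leq x_n$, monotonicity makes the absolute values drop out and the sum telescopes to $f(x_n)-f(x_0)\leq d-c\leq r$. Closedness of $M_+(X,[c,d])$ inside $BV_r(X,[c,d])$ (equivalently, inside $[c,d]^X$) follows by the same ``finite coordinate'' observation: for each pair $x\leq y$, the set $\{g:g(x)\leq g(y)\}$ is pointwise closed, and $M_+(X,[c,d])$ is the intersection of these over all comparable pairs.

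For part (3), the main idea is the classical variation-function trick adapted to an abstract linearly ordered $X$. Given $f\in BV(X)$ with $\Upsilon(f)=r<\infty$, define the \emph{variation function}
\[
u(x) \;:=\; \sup\Bigl\{\, \sum_{i=0}^{n-1} |f(x_{i+1})-f(x_i)| \;:\; x_0\leq x_1\leq\cdots\leq x_n=x,\ x_i\in X\,\Bigr\},
\]
with the convention that the empty sum (the partition consisting of $x$ alone) contributes $0$. Then $0\leq u(x)\leq r$, so $u$ is bounded. The key computation: for $x\leq y$ in $X$, any partition ending at $x$ can be extended by appending $y$, which adds the term $|f(y)-f(x)|$; taking the supremum over partitions ending at $x$ yields
\[
u(y)\;\geq\; u(x)+|f(y)-f(x)|.
\]
In particular $u$ is increasing, and $u(y)-u(x)\geq f(y)-f(x)$ shows that $v:=u-f$ is also increasing. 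Since both $u$ and $v$ are bounded (the latter because $f$ is), the decomposition $f=u-v$ is the desired one.

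I expect part (3) to be the main (though still routine) step: the definition of $u$ must be chosen carefully when $X$ has no minimum element, and the crucial inequality $u(y)\geq u(x)+|f(y)-f(x)|$ requires using partitions that end exactly at $x$ so that extension by $y$ is legal. Parts (1) and (2) are soft topological arguments about sets cut out by finite-coordinate conditions in the Tychonoff cube.
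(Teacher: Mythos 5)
Your proposal is correct and follows essentially the same route as the paper: part (3) uses the identical decomposition $u(x):=\Upsilon^{x}(f)$ (the variation restricted to $\{z\le x\}$, which equals your supremum over chains ending at $x$) and $v:=u-f$, with the inequality $u(y)\ge u(x)+|f(y)-f(x)|$ supplying monotonicity of both, while parts (1) and (2) are the same finite-coordinate closedness and telescoping arguments the paper treats as immediate. You have merely written out details the paper leaves to the classical reference.
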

\begin{proof} (1) is clear using the fact that the linear order of  $[c,d]$ is closed. 

(2) Observe that $\Upsilon(f) \leq d-c$ for every increasing function 
$f: X \to [c,d]$.   

(3) 
If in Definition \ref{d:BV} we
allow only the chains $\{x_i\}_{i=1}^n$ with $x_n \leq c$ for some given $c \in X$ then we obtain a
variation on the subset $\{x \in X: x \leq c\} \subset X$. Notation: $\Upsilon^{c}(f)$. 
As in the classical case (as, for example, in \cite{Natanson}) it is easy to see that the functions 
$u(x):=\Upsilon^{x}(f)$ and $v(x):=u(x)-f(x)$ on $X$ are increasing. These functions are bounded because $|\Upsilon^{x}(f)| \leq \Upsilon(f)$ and $f$ is bounded. 

\end{proof}

 \begin{lem} \label{l:FisVectSp} 
$\F(X)$ is a vector space over $\R$ with respect to the natural operations. 
 \end{lem}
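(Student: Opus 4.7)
To prove Lemma \ref{l:FisVectSp}, I need to check that $\F(X)$ is closed under addition and scalar multiplication (the zero function is trivially fragmented, and the algebraic axioms are inherited from $\R^X$). I will work with the standard uniformity on $\R$, where ``$\varepsilon$-small'' means ``of diameter less than $\varepsilon$''.

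For scalar multiplication, suppose $f \in \F(X)$ and $c \in \R$. If $c = 0$, then $cf \equiv 0$ is fragmented. Otherwise, given a nonempty $A \subseteq X$ and $\varepsilon > 0$, I apply the fragmentability of $f$ to $A$ with the parameter $\varepsilon/|c|$ to obtain an open $O \subseteq X$ with $O \cap A \neq \emptyset$ and $\diam f(O \cap A) < \varepsilon/|c|$. Then $\diam (cf)(O \cap A) = |c| \cdot \diam f(O \cap A) < \varepsilon$, so $cf \in \F(X)$.

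The main (and still easy) step is closure under addition. Given $f, g \in \F(X)$, a nonempty $A \subseteq X$, and $\varepsilon > 0$, the plan is a two-stage refinement. First, apply the fragmentability of $f$ to $A$ with parameter $\varepsilon/2$: there exists an open $O_1 \subseteq X$ with $A' := O_1 \cap A$ nonempty and $\diam f(A') < \varepsilon/2$. Next, apply the fragmentability of $g$ to the nonempty set $A'$ with parameter $\varepsilon/2$: there is an open $O_2 \subseteq X$ with $O_2 \cap A'$ nonempty and $\diam g(O_2 \cap A') < \varepsilon/2$. Set $O := O_1 \cap O_2$, which is open. Then $O \cap A = O_2 \cap A'$ is nonempty, and since $O \cap A \subseteq A'$, we have $\diam f(O \cap A) < \varepsilon/2$ as well. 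Hence $\diam (f+g)(O \cap A) \leq \diam f(O \cap A) + \diam g(O \cap A) < \varepsilon$, and $f + g \in \F(X)$.

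There is no substantial obstacle here; the only subtlety worth flagging is that one must refine the set $A$ in two steps (first using $f$, then using $g$ on the already-small-for-$f$ subset $A'$), rather than trying to fragment $f$ and $g$ independently on $A$ and then intersect the resulting open sets — such independent choices need not meet $A$ in a common nonempty set. This sequential refinement is exactly the device that makes the triangle inequality for diameters applicable on a single common subset $O \cap A$.
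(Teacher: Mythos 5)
Your proof is correct and follows essentially the same route as the paper: scalar multiples are handled by rescaling $\varepsilon$, and closure under addition is obtained by the same two-stage refinement (fragment $f$ on $A$ to get $A' = O_1\cap A$, then fragment $g$ on $A'$), concluding with the triangle inequality for diameters. The subtlety you flag about refining sequentially rather than independently is exactly the device the paper's proof uses implicitly.
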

 \begin{proof} Clearly, $f \in \F(X)$ implies that $cf \in \F(X)$ for every $c \in \R$. 
 	Let $f_1, f_2 \in \F(X)$. We have to show that $f_1 + f_2 \in \F(X)$. 
 	Let $\emptyset \neq A \subset X$ and $\eps >0$. Since $f_1 \in \F(X)$  there exists an open subset $O_1 \subset X$ such that 
 	$A \cap O_1 \neq \emptyset$ and $f_1(A \cap O_1)$ is $\frac{\eps}{2}$-small. Now since $f_2 \in \F(X)$, for $A \cap O_1$ we can choose an open subset $O_2 \subset X$ such that $(A \cap O_1) \cap O_2$ is nonempty and $f_2(A \cap O_1 \cap O_2)$ is $\frac{\eps}{2}$-small. Then $(f_1+f_2)(A \cap (O_1 \cap O_2))$ is $\eps$-small.   
 	  
 \end{proof}
 
 \begin{cor} \label{c:BVisFr} 
 	$BV(X) \subset \F(X)$ for any LOTS $X$.  
 \end{cor}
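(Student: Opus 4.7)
The plan is to combine the three pieces immediately preceding the corollary. Let $f \in BV(X)$. By the analog of Jordan's decomposition (Lemma \ref{l:BVprop}(3)), I can write $f = u - v$ where $u, v \colon X \to \R$ are bounded and order preserving. Since both $u$ and $v$ are bounded, their ranges lie in some compact interval $[c,d] \subset \R$, which (with its usual topology and order) is a compact partially ordered space in the sense of Nachbin.

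Next, I apply Theorem \ref{monot} to $u$ and $v$ regarded as order preserving maps $X \to [c,d]$. This directly gives $u, v \in \F(X)$.

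Finally, Lemma \ref{l:FisVectSp} says $\F(X)$ is a vector space, so $f = u - v \in \F(X)$. Since $f \in BV(X)$ was arbitrary, $BV(X) \subset \F(X)$.

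There is no real obstacle here; the corollary is essentially the composite of Lemma \ref{l:BVprop}(3) (decomposition into monotone parts), Theorem \ref{monot} (monotone $\Rightarrow$ fragmented), and Lemma \ref{l:FisVectSp} (closure of $\F(X)$ under linear combinations). The only thing worth noting explicitly is that the target $[c,d]$ of the monotone pieces must be specified as a compact partially ordered space so that Theorem \ref{monot} applies; boundedness of $u$ and $v$ (guaranteed by Lemma \ref{l:BVprop}(3)) takes care of this.
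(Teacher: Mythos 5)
Your proof is correct and follows exactly the paper's own argument: Jordan-type decomposition (Lemma \ref{l:BVprop}.3), fragmentability of monotone maps (Theorem \ref{monot}), and the vector space structure of $\F(X)$ (Lemma \ref{l:FisVectSp}). The extra remark that the bounded monotone pieces land in a compact ordered interval $[c,d]$, so that Theorem \ref{monot} applies, is a useful explicit detail the paper leaves implicit.
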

 \begin{proof} Any $f \in BV(X)$ is a difference of two increasing functions (Lemma \ref{l:BVprop}.3). Hence we can combine Theorem \ref{monot} and Lemma \ref{l:FisVectSp}. 
 	
 \end{proof}

%The following theorem is a main result of this paper. It has some applications in tame dynamical systems theory, \cite{GM-tame}. 

\begin{thm} \label{newprinciple} 
	For every linearly ordered set $X$ the family of functions $BV_r(X,[c,d])$ 
	is tame.  %does not contain independent sequences.
	In particular, $M_+(X,[c,d])$ is also tame. 
\end{thm}
\begin{proof}
	Assuming the contrary let $f_n: X \to \R$ be an independent sequence in $BV_r(X,[c,d])$. 
By Lemma \ref{l:BVprop}.3, for every $n$ we have $f_n=u_n -v_n$, where $u_n(x):=\Upsilon^{x}(f_n)$ and $v_n(x):=u_n(x)-f_n(x)$  are increasing functions on $X$. 
Moreover, the family $\{u_n, v_n\}_{n \in \N}$ remains bounded because $|\Upsilon^{x}(f_n)| \leq \Upsilon(f_n) \leq r$ for every $x \in X, n \in \N$ and $f_n$ is bounded. 
	 Apply Representation theorem \ref{RepLem}. Then we conclude that there exist two bounded sequences $t_n: Y \to \R$ and $s_n: Y \to \R$ of continuous increasing functions on a compact LOTS $Y$ which extend $u_n$ and $v_n$. 
	 Consider $F_n:=t_n -s_n$. 
	 First of all note that for sufficiently big $k \in \R$ we have $F_n \in BV_k(Y,[-k,k])$ simultaneously for every $n \in \N$. 
	 
	 Since $F_n|_X=f_n$ 
	 we clearly obtain that the sequence $F_n: Y \to \R$ is independent, too. 
	 On the other hand we can show that  $\cls (\Gamma) \subset \F(Y)$, where $\Gamma=\{F_n\}_{n \in \N} \subset \R^Y$. 
	 Indeed, by	Corollary \ref{c:BVisFr} we know that 
	 $BV_k(Y,[-k,k]) \subset \F(Y)$. Using Lemma \ref{l:BVprop}.1 we get 
	 	$$
	 	\cls(\Gamma) \subset \cls(BV_k(Y,[-k,k])) = BV_k(Y,[-k,k]) \subset \F(Y).
	 	$$
	 	Then $\Gamma$ is a tame family 
	 	by Theorem \ref{f:sub-fr}. This contradiction completes the proof. 
	 	
	 	\end{proof}	

\section{Helly's sequential compactness type theorems} 
\label{s:H}

\begin{thm} \label{t:GenH}  	
	Let $(X, \leq)$ be a linearly ordered set. Then $BV_r(X, [c,d])$ is sequentially compact 
	in the pointwise topology. 
	%In particular, its closed subspace $M_+(X,[c,d])$ of all increasing functions is also sequentially compact in the pointwise topology.  
\end{thm}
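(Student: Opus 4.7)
The plan is to reduce pointwise sequential compactness of $BV_r(X,[c,d])$ to the compact-space/continuous-function setting of Theorem \ref{f:sub-fr}, by representing any given sequence as restrictions of continuous functions on a compact LOTS via the Representation Theorem \ref{RepLem}. This essentially reuses the machinery already assembled in the proof of Theorem \ref{newprinciple}.

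Fix a sequence $(f_n) \subset BV_r(X,[c,d])$. First I would decompose each $f_n$ via the Jordan-type decomposition of Lemma \ref{l:BVprop}.3: write $f_n = u_n - v_n$ with $u_n(x) := \Upsilon^x(f_n)$ and $v_n(x) := u_n(x) - f_n(x)$ increasing on $X$. The estimates $|u_n(x)| \leq \Upsilon(f_n) \leq r$ and $|v_n(x)| \leq r + \max(|c|,|d|)$ show that the family $\{u_n, v_n\}_{n \in \N}$ is bounded and order preserving.

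Next I would apply the Representation Theorem \ref{RepLem} to this bounded family $\{u_n, v_n\}_{n \in \N}$ of increasing real functions, obtaining a compact LOTS $(Y,\leq)$, an order preserving dense injection $\nu : X \hookrightarrow Y$, and continuous increasing extensions $U_n, V_n : Y \to \R$ with $U_n \circ \nu = u_n$ and $V_n \circ \nu = v_n$. Set $F_n := U_n - V_n \in C(Y)$; these are uniformly bounded and, for a sufficiently large $k$, all lie in $BV_k(Y,[-k,k])$. Clearly $F_n \circ \nu = f_n$ for every $n$.

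Now I would invoke Theorem \ref{newprinciple}: since $Y$ is itself a linearly ordered set, $BV_k(Y,[-k,k])$ is tame, so the bounded sequence $(F_n) \subset C(Y)$ contains no independent subsequence. Because $Y$ is compact and the $F_n$ are continuous, Theorem \ref{f:sub-fr} (implication $(2) \Rightarrow (3)$) produces a subsequence $(F_{n_j})$ that converges pointwise on $Y$. Restricting through $\nu$ yields a pointwise convergent subsequence $f_{n_j}(x) = F_{n_j}(\nu(x))$ on $X$, whose pointwise limit automatically lies in $BV_r(X,[c,d])$ by the pointwise closedness in Lemma \ref{l:BVprop}.1.

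The main obstacle is that Theorem \ref{f:sub-fr} \emph{requires} a bounded family of continuous functions on a compact space, while in our setup $X$ has no topology to speak of and the $f_n$ need not be continuous even for the interval topology on $X$. The two ingredients that jointly bridge this gap are the Jordan decomposition (which reduces boundedness of variation to order preservation) and the Representation Theorem \ref{RepLem} (which replaces $X$ by a compact LOTS $Y$ on which the order preserving pieces become continuous); once both are in place, the argument is essentially the one already used for Theorem \ref{newprinciple}.
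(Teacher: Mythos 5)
Your proposal is correct and follows essentially the same route as the paper: Jordan-type decomposition (Lemma \ref{l:BVprop}.3), the Representation Theorem \ref{RepLem} to pass to continuous increasing functions on a compact LOTS, tameness via Theorem \ref{newprinciple}, and extraction of a pointwise convergent subsequence via Theorem \ref{f:sub-fr}. The only cosmetic difference is that the paper first reduces to a sequence in $M_+(X)$ and you instead carry both monotone pieces through the representation simultaneously (as in the paper's proof of Theorem \ref{newprinciple}); the substance is identical.
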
 
\begin{proof} 
First note that %(as in the proof of Theorem \ref{newprinciple}), 
using Lemma \ref{l:BVprop}.3  
one may reduce the proof to the case where $f_n: X \to \R$ 
is a bounded sequence in $M_+(X)$. 
Now, by Representation theorem \ref{RepLem} we have a bounded sequence of {\it continuous} increasing functions $F_n: Y \to \R$ on a compact LOTS $Y$, where $F_n|_X=f_n$. 
By Theorem \ref{newprinciple} the sequence $F_n$ does not contain an independent subsequence. Hence, by Theorem \ref{f:sub-fr} 
there exists a convergent subsequence $F_{n_k}$. Since the convergence is pointwise and $X$ is a subset of $Y$ we obtain that the corresponding sequence of restrictions $f_{n_k}:=F_{n_k}|_X$ is 
pointwise convergent on $X$. 

\end{proof}

The following corollary can be derived also by results of \cite{FP}. 
%0709 
Moreover, Theorem \ref{t:GenH} can be proved by \cite[Theorem 7]{FP} using Lemma \ref{l:BVprop}.4. 

\begin{cor} \label{increas} 
	Let $(X, \leq)$ be a linearly ordered set. Then the compact space $M_+(X,[c,d])$ of all order preserving maps 
	is sequentially compact. 
\end{cor}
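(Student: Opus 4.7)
The plan is to derive this immediately from Theorem \ref{t:GenH} together with Lemma \ref{l:BVprop}.2. The observation is that order preserving maps with values in $[c,d]$ have variation at most $d-c$, so $M_+(X,[c,d])$ sits inside $BV_r(X,[c,d])$ for any $r \geq d-c$; in particular we may take $r := d-c$.

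More precisely, I would fix any sequence $(f_n)$ in $M_+(X,[c,d])$. By Lemma \ref{l:BVprop}.2 this is a sequence in the pointwise closed set $M_+(X,[c,d]) \subset BV_r(X,[c,d])$. Theorem \ref{t:GenH} applied with this $r$ furnishes a subsequence $(f_{n_k})$ which converges pointwise to some $f \in BV_r(X,[c,d])$. Since $M_+(X,[c,d])$ is pointwise closed in $[c,d]^X$ (closedness of the order of $[c,d]$), the pointwise limit $f$ is itself order preserving, so $f \in M_+(X,[c,d])$. Thus every sequence in $M_+(X,[c,d])$ has a convergent subsequence with limit in $M_+(X,[c,d])$, which is exactly sequential compactness.

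There is no real obstacle here: the work has all been done in Theorem \ref{t:GenH}, and the corollary is just the observation that sequential compactness passes to pointwise closed subspaces, combined with the trivial bound $\Upsilon(f) \leq d-c$ for $f \in M_+(X,[c,d])$.
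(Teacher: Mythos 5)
Your proof is correct and is exactly the argument the paper intends: the corollary is stated as a direct consequence of Theorem \ref{t:GenH} via the inclusion $M_+(X,[c,d])\subset BV_r(X,[c,d])$ for $r\geq d-c$ from Lemma \ref{l:BVprop}.2, together with pointwise closedness of $M_+(X,[c,d])$. Nothing further is needed.
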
 

Using Nachbin's Lemma \ref{l:Nachbin} we give now in Theorem \ref{corOfGenHelly1} a further generalization  replacing $[c,d]$ in Theorem \ref{t:GenH} by partially ordered compact metrizable spaces. This gives a partial generalization of \cite[Theorem 7]{FP}. Some restriction (e.g., the metrizability) on a compact ordered space $Y$ is really essential as it follows from \cite[Theorem 9]{FP}.
% (it demonstrates that thm does remain true for $Y$= two arrows space). 

\begin{thm} \label{corOfGenHelly1} 
	Let $(X, \leq)$ be a linearly ordered set and $(Y, \leq)$ be a compact metrizable partially ordered space. Then the compact space $M_+(X,Y)$ of all order preserving maps %$X \to Y$ 
	is sequentially compact. 
\end{thm}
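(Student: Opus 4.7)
The plan is to reduce from $Y$ to countably many copies of $[0,1]$ using Nachbin's Lemma~\ref{l:Nachbin}, apply Corollary~\ref{increas} coordinatewise together with a diagonal extraction, and then patch the resulting limits back together in the compact metric space $Y$.

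First, I would produce a \emph{countable} family $\{q_i:Y\to[0,1]\}_{i\in\N}$ of continuous increasing functions that separates the points of $Y$. Nachbin's Lemma gives a separating family in $C_+(Y,[0,1])$, possibly uncountable; to cut it down, observe that the diagonal map $Y\hookrightarrow [0,1]^{C_+(Y,[0,1])}$ is a topological embedding of a second countable compactum, hence its image lives in a second countable subspace of the product. Consequently finitely many coordinates suffice to describe any basic open set in a countable base of $Y$, which allows the extraction of a countable subfamily $\{q_i\}_{i\in\N}\subset C_+(Y,[0,1])$ that still separates points.

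Next, given a sequence $\{f_n\}\subset M_+(X,Y)$, for each $i\in\N$ the composition $q_i\circ f_n: X\to [0,1]$ lies in $M_+(X,[0,1])$, since the composition of two order preserving maps is order preserving. By Corollary~\ref{increas}, the space $M_+(X,[0,1])$ is sequentially compact, so a standard diagonal procedure produces a subsequence $\{f_{n_k}\}$ such that, for every $i\in\N$, the sequence $q_i\circ f_{n_k}$ converges pointwise on $X$.

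Finally, I would upgrade this to pointwise convergence of $f_{n_k}$ itself. Fix $x\in X$. Since $Y$ is compact metric, $\{f_{n_k}(x)\}$ has a convergent subsequence with some limit $y\in Y$. For any other subsequential limit $y'$, continuity of each $q_i$ forces
$$
q_i(y)=\lim_k q_i(f_{n_k}(x))=q_i(y') \qquad \forall i\in\N,
$$
so $y=y'$ by the separation property of $\{q_i\}$. Hence $\{f_{n_k}(x)\}$ has a unique cluster point in the compact metric space $Y$ and therefore converges; define $f(x):=\lim_k f_{n_k}(x)$. Since $M_+(X,Y)$ is pointwise closed in $Y^X$ (noted right before Lemma~\ref{l:Nachbin}), the limit $f$ belongs to $M_+(X,Y)$, completing sequential compactness.

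The only step with any subtlety is the first one, the extraction of a countable point-separating subfamily from $C_+(Y,[0,1])$; everything else is the diagonal argument and the standard metric-compactness trick for uniqueness of cluster points. Once a countable separating family is in hand, Corollary~\ref{increas} does the heavy lifting.
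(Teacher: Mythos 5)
Your proof is correct and follows essentially the same route as the paper: both use Nachbin's Lemma \ref{l:Nachbin} together with metrizability of $Y$ to obtain a countable point-separating family of continuous increasing maps $Y\to[0,1]$, and then reduce to the sequential compactness of $M_+(X,[0,1])$ from Corollary \ref{increas}. The only difference is presentational --- the paper packages your diagonal extraction and cluster-point argument as a closed topological embedding $M_+(X,Y)\hookrightarrow M_+(X,[0,1])^{\N}$ into a countable product of sequentially compact spaces.
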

\begin{proof} First of all note that $M_+(X,Y)$ is compact being a closed subset of $Y^X$. Here we have to use the assumption that the given order on $Y$ is closed (Definition \ref{d:ord}). 
	$M_+(X,[0,1])$ is sequentially compact by 
	Theorem \ref{t:GenH}. Therefore, its countable power  $M_+(X,[0,1])^{\N}$ is also sequentially compact. Now observe that $M_+(X,Y)$ is topologically embedded (as a closed subset)  into $M_+(X,[0,1])^{\N}$. 
	Indeed,  by  Nachbin's Lemma \ref{l:Nachbin} continuous increasing maps $Y \to [0,1]$ separate the points. Since $Y$ is a compact metrizable space  one may choose a countable family $h_n$ of increasing continuous maps which separate the points of $Y$. For every $f \in M_+(X,Y)$ define the  function $$u(f): \N \to M_+(X,[0,1]), \ n \mapsto h_n \circ f.$$
	This assignment defines a natural topological embedding of compact Hausdorff spaces (hence, this embedding is closed) 
	$$
	u: M_+(X,Y) \hookrightarrow M_+(X,[0,1])^{\N}, \ \ u \mapsto u(f)=(h_n \circ f)_{n \in \N}.
	$$
	
\end{proof}

Another Helly type theorem can be obtained for functions of bounded variation 
with values into a compact metric space.

\begin{defin} \label{BV2} 
	Let $(Y,d)$ be a metric space and $f: (X, \leq) \to (Y,d)$ be a bounded function. 
	Replacing in Definition \ref{d:BV} the Formula \ref{BV1} by 
	\begin{equation} \label{BVmetrCase}
	\sum_{i=0}^{n-1} d(f(x_{i+1}),f(x_i)) \leq r
	\end{equation}
	We obtain the definition of $f \in BV_r(X,Y)$. 
\end{defin}

In the particular case of $(X,\leq)=[a,b]$ Definition \ref{BV2} and Theorem  \ref{corOfGenHelly2} are well known, \cite{BC,Ch}. 

\begin{thm} \label{corOfGenHelly2} 
	Let $(X, \leq)$ be a linearly ordered set and $(Y,d)$ be a compact metric space. 
	Then the compact space $BV_r(X,Y)$ 
	is sequentially compact in the pointwise topology. 
\end{thm}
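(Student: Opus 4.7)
The plan is to reduce to Theorem \ref{t:GenH} by embedding $BV_r(X,Y)$ as a closed subset of a countable product of copies of $BV_r(X,[0,D])$, in close analogy with the strategy of Theorem \ref{corOfGenHelly1}. Let $D:=\diam(Y)$ and pick a countable dense subset $\{y_n\}_{n \in \N}$ of the compact metric space $(Y,d)$. Define the $1$-Lipschitz maps $h_n: Y \to [0,D]$ by $h_n(y):=d(y,y_n)$. Since $\{y_n\}$ is dense in $Y$, the family $\{h_n\}_{n \in \N}$ separates the points of $Y$.

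First I would check that $f \in BV_r(X,Y)$ implies $h_n \circ f \in BV_r(X,[0,D])$ for every $n$: this is immediate from the $1$-Lipschitz property, since for any chain $x_0 \leq \cdots \leq x_m$ in $X$,
\[
\sum_{i=0}^{m-1} |h_n(f(x_{i+1})) - h_n(f(x_i))| \leq \sum_{i=0}^{m-1} d(f(x_{i+1}), f(x_i)) \leq r.
\]
Consequently the map
\[
u: BV_r(X,Y) \to BV_r(X,[0,D])^{\N}, \qquad f \mapsto (h_n \circ f)_{n \in \N}
\]
is well-defined. It is continuous in the pointwise topologies (each coordinate $f \mapsto h_n(f(x))$ is so), and injective because $\{h_n\}$ separates the points of $Y$. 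As a preliminary step I would verify that $BV_r(X,Y)$ is pointwise closed in $Y^X$, hence compact by Tychonoff: if $f_k \to f$ pointwise then for any chain, $\sum_i d(f(x_{i+1}), f(x_i)) = \lim_k \sum_i d(f_k(x_{i+1}), f_k(x_i)) \leq r$ by continuity of $d$. Since the domain is compact Hausdorff and the target is Hausdorff, the continuous injection $u$ is a topological embedding with closed image.

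Finally, by Theorem \ref{t:GenH} each factor $BV_r(X,[0,D])$ is sequentially compact, and a countable product of sequentially compact spaces is sequentially compact via the standard diagonal extraction. A closed subset of a sequentially compact space is sequentially compact, so $u(BV_r(X,Y))$, and hence $BV_r(X,Y)$ itself, is sequentially compact in the pointwise topology.

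I do not expect a serious obstacle: the only substantive observation is that the defining variation bound of $f$ transfers to $h_n \circ f$, which is transparent once one notes that the $h_n$ are $1$-Lipschitz. Everything else is routine bookkeeping layered on top of Theorem \ref{t:GenH}. If one preferred a direct argument without the embedding formalism, one could equivalently: extract, by diagonal argument, a subsequence $f_{k_j}$ such that $h_n \circ f_{k_j}$ converges pointwise for every $n$, and then use the fact that $Y$ is compact metric together with point-separation of $\{h_n\}$ to upgrade coordinate-wise convergence to pointwise convergence of $f_{k_j}$ into $Y$. This alternative essentially unpacks the same embedding.
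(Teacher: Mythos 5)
Your proposal is correct and follows essentially the same route as the paper: the paper also takes a countable dense set $\{y_n\}$ in $Y$, uses the $1$-Lipschitz point-separating maps $h_n(y)=d(y_n,y)$ to push $BV_r(X,Y)$ into a countable product of spaces $BV_r(X,[c,d])$, and then invokes Theorem \ref{t:GenH} together with the embedding argument of Theorem \ref{corOfGenHelly1}. The only difference is that you spell out the bookkeeping (the closedness of $BV_r(X,Y)$ in $Y^X$ and the diagonal extraction) which the paper leaves as ``similar to the proof of Theorem \ref{corOfGenHelly1}.''
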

\begin{proof} Since $(Y,d)$ is a compact metric space there exist  
	countably many Lipshitz 1 functions $h_n: (Y,d) \to [0,1]$ which separate the points of $Y$. Indeed, take a countable dense subset $\{y_n: \ n \in \N\}$ in $Y$ and define $h_n(x):=d(y_n,x)$. 
	Then $h_n \circ f \in  BV_r(X,[0,1])$ for every $f \in BV_r(X,Y)$. 
	 The rest is similar to the proof of Theorem \ref{corOfGenHelly1}. 
	
\end{proof}

 For a direct proof of Helly's Theorem \ref{t:HellyClassic} see, for example, \cite{Natanson}.  
 %for functions $[a,b] \to (Y,d)$ with values in metric spaces.  
 An elegant argument was presented by Rosenthal in \cite{Ros1}.  
 The set $M_+([0,1],[0,1])$ is a 
 compact subset in the space $\B_1[0,1]$ of Baire 1 functions. 
 Hence it is sequentially compact because the compactness and sequential compactness are the same for subsets $\B_1(X)$ for any Polish $X$, \cite{Ros1}.  
 
 Another known classical proof is based on the first countability of the Helly's compact space, \cite{StSe}. 
 Such a proof is impossible in general for Corollary \ref{increas}. Indeed, the cardinality $card(M_+(X,[c,d])) \geq card(X)$. When $card(X) > 2^{\aleph_0}$ the corresponding $M_+(X,[c,d])$ is not first countable because 
 the cardinality of a
 first countable compact Hausdorff space is not greater than $2^{\aleph_0}$. 
 However as it was pointed out by Eli Glasner, using a version of Representation theorem \ref{RepLem}, the proof of Corollary \ref{increas} can be reduced to the case when 
 $(X, \leq)$ is metrizable and compact in its interval topology. In this case the principal scheme of the proof in \cite[Exercise 107]{StSe} (as well as the scheme of Rosenthal's argument) 
 seems to be valid with some easy adaptations.

%0709
\begin{remarks}
	 As we already mentioned Theorems \ref{t:GenH} and \ref{corOfGenHelly1} cannot be generalized to the assertion with non-separable target spaces. Furthermore, as it was remarked by the referee,  
    a straightforward generalization of Theorem \ref{t:GenH} would be obtained
	if we replace $[c, d]$ by a sequentially compact linearly ordered abelian
	group. 
	%But it is known that such a group can be embedded into a compactification of $\R$. 
	The situation with Theorem \ref{corOfGenHelly2}  is similar. 
	In contrast, one may further generalize Theorem \ref{corOfGenHelly1} and obtain the following result: 
	Let $X$ be a linearly ordered set and $Y$ be a sequentially
	compact partially ordered space with hereditary density $< \mathbf{s}$ 
	%111216 (where, $\mathbf{s}$ is the \textit{splitting number}, \cite{FP}) 
	then $M_+(X,Y)$ is sequentially compact. This assertion can be proved 
	similarly to \cite[Theorem 7]{FP}. It is a generalization of Theorem 5.3 and also a full generalization of \cite[Theorem 7]{FP}. 
\end{remarks}

%0709 
\nt \textbf{Acknowledgement.} I would like to thank the referee for several important suggestions. 
 
 \vskip 1cm 
 
\bibliographystyle{amsplain}

\end{document}